\newtheorem{Lemma}{Lemma}
\newtheorem{Proposition}[Lemma]{Proposition}
\newtheorem{Remark}[Lemma]{Remark}
\newcommand{\eq}[2]{\begin{equation}\begin{split}#1\end{split}\label{#2}\end{equation}}
\begin{document}

\title[Mathematical Modeling of Drug Use]{Mathematical Modeling of Drug Use: The dynamics of monosubstance dependence for two addictive drugs}

\pagestyle{headings}

\author[Colegate]{ Stephen  Colegate}\address{Department of Mathematics, Miami University}  \email{colegasp@miamioh.edu} 
\author[Liu]{Changrui Liu}\address{Department of Mathematics, Miami University}  \email{liuc4@miamioh.edu}

\begin{abstract}{ Based on previous work done in this field, we build a dynamical system that describes changes in drug addiction in an isolated population when two addictive substances are available simultaneously. We then use our model to investigate whether the system captures the process  of users switching drug habits.  One of the motivations  for this project is to mathematically check the conjecture that being addicted to a less-addictive substance will effectively lead individuals to become dependent on more addictive  and potentially more dangerous  drugs. 

We introduce additional assumptions, under which our model is reduced to a competitive Lotka-Volterra system. This dynamical system has three or four  fixed points, stability of which then gives an implication about  the outcomes of the competition between addictive substances and, therefore, the fate of individuals in the population.  From the analysis of the reduced model, we determine
that there actually exist parameter regimes that capture the following dynamics: depending on the initial distribution of the drug preference; either the use of both drugs will die out, the usage of one of the drugs will become prevalent, or addictions to both drugs will coexist.
}\end{abstract}

\maketitle

\section{Introduction}

Drug addiction is a major concern in modern societies. For example, one of the most widely spread legal addictive substances is nicotine. Particularly, prevalence of smoking  is highest among young people \cite{Compton}. Smokers influence nonsmokers in a variety of ways \cite{Chen}, from second-hand and third-hand nicotine exposure \cite{Ballantyne} to depiction of smoking in movies and television. The availability, low costs and  absence of stigma makes it very easy to start nicotine consumption through  smoking. Nonusers often experience peer pressure or some other form of temptation \cite{Compton}. Nicotine is considered an extremely addictive substance, but the danger to the health of users and people around them, although is very well known, is not immediate. On the other hand, most  illegal drugs, which are  highly addictive, put the lives of users and others around them under direct threat. Very often, it only takes one incident to cause addiction \cite{Davies}. Sometimes drug addiction is also viewed as a mental illness problem \cite{Weinberger}. The person may be unable to resist his or her addiction. Some users are successful in breaking their addictions through some form of rehabilitation. However, many are unsuccessful  in their attempts to  break away from the habit no matter how hard they try  \cite{Chen}, \cite{Davies}.  

The first major groundwork in modeling drug addictions is the Epidemic SIR Model. Developed by Kermack and McKendrick, this models an epidemic that affects a population \cite{Brauer}. The population is divided into three categories, namely susceptible, infected, and recovered. The susceptible group includes those who have never been affected with the disease. When an individual contracts the illness, they are then classified as infected. A person who becomes infected may be able to recover with medical treatments or the like. Those who manage to overcome their illness are then classified as recovered. Many epidemics can be tracked using the Epidemic SIR model.

Inspired by Kermack and McKendrick's Epidemic SIR Model, Castillo-Garsow, Jordan-Salivia, and Rodriguez-Herrera constructed a dynamical system to model smoking addictions and relapse \cite{Castillo}. Their model is similar to those developed for other drugs, including cocaine \cite{Caselles}. In their model, addiction to nicotine is treated as an illness. People who are not smoking or do not exhibit symptoms of addiction to nicotine form the susceptible group. When a person from the susceptible group starts showing signs of addiction, they are then classified as an drug addict. If an addict manages to break free from the habit, they then move to the recovered group. The nature and the stability of the fixed points of the system indicate what happens to the population with time. The fixed points show whether or not nicotine addiction is expected to grow or decline.

In this paper, we expand the model in \cite{Castillo} by introducing a second drug. It has been observed \cite{Fergusson} that when a user becomes addicted to one drug, they have a greater chance of becoming addicted to other ``harder" drugs. The introduction to different addictive substances can be initiated by seeking 
to keep the same level of ``high", instantaneous availability of it, or as an attempt to break free from their current habit.  It is therefore desirable to analyze how \emph{two} drugs coexist and interact with each other in a society.
We use this new model to study the dynamics of addiction between two  drugs mathematically, using methods of applied dynamical systems. The modeling approach is similar to one used in \cite{Henneman} to track both influenza and bacterial infections in a population to see how one disease interacts with the other. To summarize, we present a system of differential equations that models drug addicted population dynamics when two addictive substances are available. We analyze the equilibria of this system to examine effects that particular drug users have on the rest of the population.

\section{Model}
\subsection{The Population}

The population is divided into five groups. Let $S = S(t)$ denote the number of individuals who are susceptible to becoming addicted at time $t\geq 0$. In other words, those in group $S$ have never experienced a drug addiction in their lifetime. These include newborns, so the growth of $S$ is related to the population birth rate. 
Let $D_1=D_1(t)$ and $D_2=D_2(t)$ be the people who, at time $t$, are addicted to drug 1 and drug 2, respectively. Those who are in either $D_1$ or $D_2$ are active taking their respective drug. In order to be classified as an active drug user, the person must have used the drug within the past year. Let $R_1$ and $R_2$ represent those who have recovered from using either drug 1 or drug 2, respectively. The distinction of whether a person is classified into $R_1$ or $R_2$ is based on their last drug addiction. Persons who are in one of the recovered groups have not used either drug for at least a year or more.

Let $N$ denote the  total population, that is
\begin{equation}N=S+D_1+D_2+R_1+R_2.\label{1}\end{equation}
In this paper, we assume that the size of the population remains constant. This is justified if the total population $N$ is much larger than the drug-addicted portion of it. A good rule of thumb generically used in population dynamics is to make the size of the population at least $10,000$ or greater to ensure the total population remains fixed with time. 

We make a few additional assumptions about the five different groups in the population. First, we assume that there is no intersection between those in $D_1$ and in $D_2$. This implies that a drug 1 user cannot be addicted to drug 2, so long as the person is addicted to the first drug.  We do not know how realistic this assumption is, as it is clearly possible that any individual can be addicted to one or both drugs. Here, we will assume that the number of such individuals is negligibly small compared to the number of addicts who have a preferred drug.
To get around this difficulty, the definition of $D_1$ and $D_2$ must be carefully developed. We say that a person who is classified in the $D_1$ group not only is a current user of drug 1 but that person prefers to use drug 1 over drug 2. Subjects in $D_1$ can both be actively using drug 1 and drug 2 at the same time, however, their preference is to use drug 1 over drug 2 when given the choice between the two.
We define $D_2$ in a similar manner. A person classified in the $D_2$ group prefers using drug 2 over drug 1. By defining $D_1$ and $D_2$ in this way, we avoid misinterpreting what it means to be an addict of one or both drugs.

A person who has not used a particular drug for at least one year is classified as recovered from their addiction. Relapse occurs when a person who has been clean from any drug use longer than a year begins their addiction to the same drug again. A person who has recovered from using drug 1 for one year is placed in $R_1$. The person relapses back to $D_1$ if that person becomes addicted to drug 1 again. We assume that a person who relapses from a particular drug resumes use of the same preferred drug.
So in the model, there is no direct link between $D_1$ and $R_2$, nor is there one for $D_2$ and $R_1$. 
If a person is addicted to both drugs but only recovers from one, they are not  classified as recovered but is instead placed in the group who prefer using the other drug. Members of both $R_1$ and $R_2$ may include those who have recovered from both drugs. They are distinguished only by their most recent addiction.

\subsection{The Parameters}

The parameters in the model are identified by which drug they are associated with. Let $i=1$, $2$ for drug 1 and drug 2 respectively. Each of the parameters are assumed constant throughout the entire process. Since all of the parameters are fractional rates, their values must be between 0 and 1, inclusive. It is quite possible in some isolated cases for some parameters to be 0 and may be treated as a special case.  See the Conclusion section for a brief discussion on these special cases. Each of these parameters can either be derived or estimated. See \cite{Castillo} and \cite{Brauer} for discussion on one method of estimating these parameters numerically.

Let $\beta_i$ denote the influence rate of the drug $i$ users per year. The influence rate takes into account any actions that drug users could do that can influence susceptible members into using drug $i$. Influences may include peer pressure, family history, and whether friends and colleagues are using drugs, but we assume that a person who is susceptible to becoming a drug user is only influenced by peer pressure alone. While there may be other reasons why an individual may become addicted to drugs, such as for medical or biological needs or for genetical predisposition, reasons other than peer pressure are not taken into account in these parameters here.

Define $\alpha_1$ to be the rate of those who switch from being a primary drug 2 user to becoming a primary drug 1 user. In a similar manner, let $\alpha_2$ represent the rate of those who become primary drug 2 users who were originally drug 1 users. We can think of these two parameters as the ``switchover rate''. These two parameters give an indication of how often drug users switch their preference. The meaning of $\alpha_1$ and  $\alpha_2$ are similar to those of the $\beta_i$'s. Here, $\alpha_1$ and  $\alpha_2$ are both treated as influence rates. Where they differ in interpretation is the group they influence. While the $\beta_i$'s place peer pressure on those who have never used drugs before, the $\alpha_i$'s represent \emph{current} drug users persuading addicts of the other drug to switch their preference to drug $i$ instead. Since peer pressure is significantly different on those who have never used drugs before than those who are currently addicted, the model distinguishes between these two types of influences.

Denote $\gamma_i$ as the recovery rate for the drug $i$ users per year. This parameter determines how many people recover from $D_i$ to $R_i$ in one year. The size of $\gamma_i$ reflects upon how many active users recover from using drug $i$. The larger $\gamma_i$ gets, the easier it is for a user to quit using drug $i$. Those who recover are classified by their most recent drug addiction. Similarly, let $\delta_i$ be the relapse rate per year for drug $i$ users. Like the recovery rate, the relapse rate determines how many people relapse from $R_i$ to $D_i$ in one year. Individuals that relapse are classified as a drug addict, based solely on their previous drug preference. The larger $\delta_i$ gets, the more users are relapsed back to their previous drug $i$ addiction. Both $\gamma_i$ and $\delta_i$ measure the changes between $D_i$ and $R_i$. 

Another important parameter of interest is the mortality rate $\mu$. This parameter controls the number of subjects entering and leaving the entire population. The birth rate measures how many individuals enter into the population. The death rate measures how many members leave the population from each group. We can think of the population as isolated from other surrounding populations. New members are either born into the population or entered the population with no previous drug use. This means that new members of the population are first classified in the $S$ group. The model does not take into consideration current drug users who might enter the population, by assuming  their number  to be negligible.

An important assumption to make in this model is to assume the birth rate and the death rate  of the non-addicted  population are the same. This means that the number of persons who are born into the population and the number of those leaving the population is fixed. This follows from the assumption that the total population $N$ remains constant. More importantly, we show that the death rate is the same for each group in the population. So, since the total population is assumed to be constant,  being addicted to a drug does not increase the chances of death, in contrast to someone who has never used drugs in their lifetime. This does \emph{not} mean that the number of deaths is the same for all participating groups. The number of deaths is proportional to the size of the group. The birth rate is proportional to the size of the total population $N$.

\begin{Proposition} Assume that the total population $N$ is constant with the parameters defined above. Then the mortality rate  is constant throughout the entire model, that is, the birth rate and the death rate is the same for each group in the population.\end{Proposition}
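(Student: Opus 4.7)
The plan is to write down the balance equations for each group carrying a \emph{generic} per-capita mortality rate, sum them to obtain $dN/dt$, and then show that the requirement $N\equiv\text{const}$ for every admissible initial distribution forces all these rates to coincide with the common birth rate.

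More concretely, I would introduce (temporarily) separate per-capita death rates $\mu_S,\mu_{D_1},\mu_{D_2},\mu_{R_1},\mu_{R_2}$ and a birth rate $\lambda$, the latter proportional to the total population $N$ as stated in the text. Using the verbal description of the flows between groups, write
\eq{\dot S &= \lambda N - \beta_1\tfrac{SD_1}{N}-\beta_2\tfrac{SD_2}{N}-\mu_S S,\\
\dot D_1 &= \beta_1\tfrac{SD_1}{N}+\alpha_1\tfrac{D_1D_2}{N}-\alpha_2\tfrac{D_1D_2}{N}-\gamma_1 D_1+\delta_1 R_1-\mu_{D_1}D_1,\\
\dot D_2 &= \beta_2\tfrac{SD_2}{N}+\alpha_2\tfrac{D_1D_2}{N}-\alpha_1\tfrac{D_1D_2}{N}-\gamma_2 D_2+\delta_2 R_2-\mu_{D_2}D_2,\\
\dot R_1 &= \gamma_1 D_1-\delta_1 R_1-\mu_{R_1}R_1,\\
\dot R_2 &= \gamma_2 D_2-\delta_2 R_2-\mu_{R_2}R_2,}{genericODE}
(the precise form of the transition terms is irrelevant for the proof; what matters is that every transition term appears with opposite signs in exactly two equations). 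Adding these five equations, all $\beta_i$, $\alpha_i$, $\gamma_i$, $\delta_i$ contributions cancel in pairs and only the birth/death terms survive:
\eq{\frac{dN}{dt}=\lambda N-\mu_S S-\mu_{D_1}D_1-\mu_{D_2}D_2-\mu_{R_1}R_1-\mu_{R_2}R_2.}{Ndot}

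Next I would invoke the constancy hypothesis $dN/dt\equiv 0$ together with the constraint \eqref{1}. Substituting $N=S+D_1+D_2+R_1+R_2$ into \eqref{Ndot} yields
\eqnn{(\lambda-\mu_S)S+(\lambda-\mu_{D_1})D_1+(\lambda-\mu_{D_2})D_2+(\lambda-\mu_{R_1})R_1+(\lambda-\mu_{R_2})R_2=0.}
This linear identity must hold for \emph{every} admissible state $(S,D_1,D_2,R_1,R_2)$ in the nonnegative orthant (the model has to be well-posed for any initial distribution of the population among the five classes), so each coefficient must vanish. This forces $\mu_S=\mu_{D_1}=\mu_{D_2}=\mu_{R_1}=\mu_{R_2}=\lambda$, and calling this common value $\mu$ gives the claim.

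The only conceptually delicate step is the one at the end: justifying that the identity must hold pointwise rather than merely along one trajectory. I would address this by pointing out that the biological interpretation of the parameters requires $N$ to remain constant regardless of the initial partitioning of the population into addicted, susceptible, and recovered subgroups, so the vanishing of the right-hand side of \eqref{Ndot} cannot rely on any special relation among $S,D_1,D_2,R_1,R_2$ beyond \eqref{1}; hence the coefficients must agree. Everything else is bookkeeping.
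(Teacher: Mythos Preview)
Your proof is correct and follows the same overall arc as the paper's: sum the balance equations, use $dN/dt=0$, and reduce to a linear identity in the compartment sizes. The difference is in the closing step. The paper assumes from the outset that the birth rate and the susceptible death rate already coincide (both equal to $\mu$), and it imposes the extra biological hypothesis $\mu_{ij}\ge\mu$ (addicted or recovered individuals die at least as fast as the non-addicted); with those assumptions the identity becomes a sum of nonnegative terms equal to zero, so each term must vanish individually. You instead drop the ordering hypothesis and argue that the identity must hold for \emph{every} admissible partition of $N$ among the five classes, whence the coefficients vanish by linear independence. Your route is slightly more general---it does not need $\mu_{ij}\ge\mu$, and it also derives rather than assumes $\lambda=\mu_S$---at the cost of making explicit the ``for all initial states'' quantifier that the paper's argument leaves tacit.
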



\begin{proof} Since the total population $N$  is assumed to be constant, the same number of people are coming into and leaving the population. This implies
$$\mu N= \mu S+ \mu_{11}  D_1 +\mu_{12}  D_2 + \mu_{21}  R_1 +\mu_{22}  R_2,$$
where $\mu_{ij} \geq \mu>0$, $i$,$j=1$, $2$, as it is natural to assume that the death rate of the addicted or the recovered is at least as high as the non-addicted. Then, since $ N= S+ D_1 + D_2 + R_1 +R_2$,  we have
$$0= (\mu_{11} -\mu) D_1 +(\mu_{12} -\mu) D_2 + (\mu_{21} -\mu)  R_1 +(\mu_{22} -\mu)  R_2.$$
Since all of the involved quantities are nonnegative, this equality holds if and only if $\mu_{ij}=\mu$.
\end{proof}

\subsection{The model}

Now that the population groups and the parameters have been specified, we present and justify the population dynamics model that we propose for modeling  the addiction  in a presence of two drugs. Each of the equations below describes the rate of change in the number of individuals in each group described above per year. Members of the population first enter $S$ when they are born and then either stay in $S$ or evolve into members of one of the other groups.

\begin{table}[b]
\centering
\label{my-label}
\begin{tabular}{|ll|}
\hline
\multicolumn{2}{|c|}{\textbf{Parameters}}                               \\ \hline
$\beta_{i}$  & The influence rate of drug $i$                                   \\
$\gamma_{i}$ & The recovery rate of drug $i$                                    \\
$\delta_{i}$ & The relapse rate of drug $i$                                     \\
$\alpha_{i}$ & The rate of drug $j$ users who switch their preference to drug $i$ \\
$\mu$     & The mortality rate                                             \\ \hline
\end{tabular}
\caption{A summery of all the parameters in the full model \eqref{2} where $i,j = 1,2$. All rates are given in units of users per year.}
\end{table}

 The model can be written as 
\eq{ 
\frac{d\, S}{ d \, t}&= \mu N- \mu S-  \beta_1 \frac{S D_1}{N}-\beta_2 \frac{S D_2}{N},\\
\frac{d\, D_1}{d \, t}& = \beta_1 \frac{S D_1}{N} + \delta_1R_1 + \alpha_1 \frac{D_1D_2}{N} - \alpha_2 \frac{D_1D_2}{N}-\gamma D_1 - \mu D_1,\\
\frac{d\, D_2}{d \, t} &= \beta_2 \frac{S D_2}{N} + \delta_2R_2 + \alpha_2 \frac{D_1D_2}{N} - \alpha_1 \frac{D_1D_2}{N}-\gamma D_2 - \mu D_2,\\
\frac{d\, R_1}{d \, t} &= \gamma_1D_1-\delta_1 R_1 - \mu R_1, \\
\frac{d\, R_2}{d \, t} &= \gamma_2D_2-\delta_2 R_2 - \mu R_2.  
}{2}
We visualize the full model as a diagram in  Figure~\ref{fig:fig1}. Individuals who are born into the population are placed in $S$. Each of the five groups lose members due to the constant mortality rates. The size of $D_1$ and $D_2$ grow by taking members away from $S$. When a person first becomes addicted to a drug, they are then classified into one of these two groups. Once a person leaves the susceptible group, they can no longer be classified back into $S$. That person will remain addicted until either one of the following occurs: (1) the user dies, (2) the user recovers from their drug addiction or (3) the user decides to switch their drug preference.

Members of the population that have become addicted to a drug can either switch their preference or recover from using drugs altogether. If a person decides to switch preferences, they are now classified into the other drug group. A person can switch preferences multiple times. There is no limit to how often one person can switch. The model does not reflect those who quit using the drug they do not have a preference for. If a person prefers using drug 1, they may or may not have used drug 2 at any point. Once a person  recovers from both drugs, they then are classified as recovered. Which group they are classified in is based solely on their previous addiction. If a person relapses, they  are assumed to relapse back to their drug of choice.

\begin{figure}[b]
\scalebox{0.45}{
\includegraphics{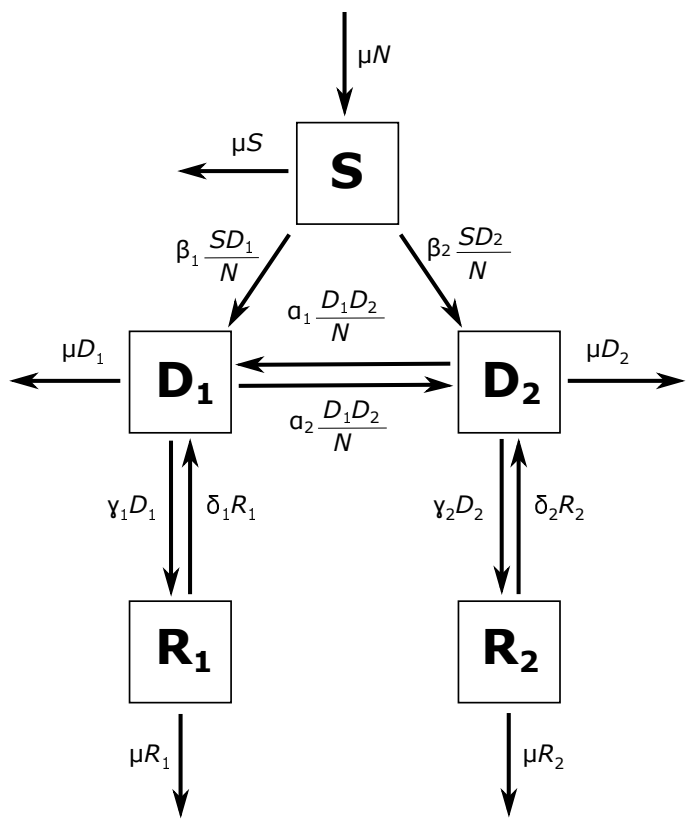}}
\caption{\label{fig:fig1}\small This is a illustration of model \eqref{2}, depicting the different population groups and their interactions. A person enters the population through $S$. The influences of $D_1$ and $D_2$  on $S$ causes those in the susceptible group to become addicted to either drug 1 or drug 2. Once a person leaves $S$, they cannot go back. When a person fully recovers from using either drug, they then recover to $R_1$  or $R_2$, depending on the last drug the person was addicted to. The flows between $D_1$  and $D_2$  signifies the change of preferences between drugs among drug users. Drug users may switch preferences as many times as they like. The parameter $\mu$ represents the natural birth rate and death rate  for each participating group.}
\end{figure}

The model in its current form is highly complex due to the number of unknown  quantities and parameters. 
The assumption that the population size is all the same reduces the number of unknowns in the system. Indeed, we substitute
$S=N-(D_1+D_2+R_1+R_2)$ into \eqref{2} to get
\eq{ 
\frac{d\, D_1}{d \, t}& = \beta_1(N-D_1-D_2-R_1-R_2) \frac{ D_1}{N} + \delta_1R_1 + \alpha_1 \frac{D_1D_2}{N} - \alpha_2 \frac{D_1D_2}{N}-\gamma D_1 - \mu D_1,\\
\frac{d\, D_2}{d \, t}& = \beta_2(N-D_1-D_2-R_1-R_2) \frac{ D_2}{N} + \delta_2R_2 + \alpha_2 \frac{D_1D_2}{N} - \alpha_1 \frac{D_1D_2}{N}-\gamma D_2 - \mu D_2,\\
\frac{d\, R_1}{d \, t} &= \gamma_1D_1-(\delta_1+\mu)R_1, \\
\frac{d\, R_2}{d \, t} &= \gamma_2D_2-(\delta_2+\mu)R_2,
}{3}
and, upon rearranging, obtain
\eq{ 
\frac{d\, D_1}{d \, t}& =(\beta_1-\gamma_1-\mu )D_1 +\delta_1R_1-\frac{D_1}{N} \left[
\beta_1 D_1 +(\alpha_2 -\alpha_1 + \beta_1)D_2+\beta_1 R_1+\beta_1 R_2 \right] ,\\
\frac{d\, D_2}{d \, t} &=(\beta_2-\gamma_2-\mu )D_2 +\delta_2R_2-\frac{D_2}{N} \left[
(\alpha_1 -\alpha_2 + \beta_2) D_1 +\beta_2 D_2+\beta_2 R_1+\beta_2 R_2 \right],\\
\frac{d\, R_1}{d \, t} &= \gamma_1D_1-(\delta_1+\mu)R_1, \\
\frac{d\, R_2}{d \, t} &= \gamma_2D_2-(\delta_2+\mu)R_2.
}{4}
This system includes four population groups so the model must be viewed as a four-dimensional dynamical system. However, as a first step in our analysis, we propose to simplify this model and reduce it to a two-dimensional dynamical system. Since the focus of the paper is to examine the interaction between the populations addicted to either drug, the model should be reduced in such a way that places emphasis on $D_1$ and $D_2$. These assumptions and the reduction of the full model are developed in the next section.

\begin{Remark}
\rm{The approach presented here uses the original definitions of $D_{i}$ and $R_{i}$. One could also non-dimensionalize the system in the following way. The total size of the population $N$ is a fixed constant with \eqref{1} as a constraint. $N$ can be factored out from the system by redefining the definitions of $D_{i}$ and $R_{i}$. By rescaling, we can write $\frac{D_{i}}{N}$ and $\frac{R_{i}}{N}$ in place of $D_{i}$ and $R_{i}$ in \eqref{2}, respectively. The importance is the relative size of $D_{i}$ and $R_{i}$ to $N$, not the actual size of $N$. Thanks to the anonymous reviewer for this suggestion.}
\end{Remark}

\section{Analysis of the Reduced Model}
 
In this section we formulate  reasonable conditions  that can be implemented to reduce the full model. The focus of this paper is to study the interaction of the populations $D_1$  and $D_2$  addicted to one of the two available  drugs, respectively. There are additional assumptions that are needed to reduce the model further but still capture the interaction between  these two groups. So, we assume that the number of members in $R_1$ and $R_2$ remain constant  throughout time, that is, $\frac{dR_1}{dt} =\frac{dR_2}{dt}=0$. Then from \eqref{4}, $R_1$  and $R_2$ can be written in terms of $D_1$ and $D_2$:
\begin{equation}R_1= \frac{\gamma_1}{\delta_1+\mu}D_1, \quad  R_2= \frac{\gamma_2}{\delta_2+\mu}D_2.\label{5} \end{equation}
This reduces the model to the two-dimensional dynamical system,
\small
\eq{ 
\frac{d\, D_1}{d \, t}& =\left (\beta_1-\mu  -\frac{\delta_1 \gamma_1 }{\delta_1+\mu}\right)D_1-\frac{D_1}{N} \left[
\beta_1 \left( \frac{\gamma_1 }{\delta_1+\mu} + 1\right)D_1+\left(\beta_1\left( \frac{\gamma_2}{\delta_2+\mu} +1\right)-\alpha_1  +\alpha_2\right )D_2\right] ,\\
\frac{d\, D_2}{d \, t} &=  \left(\beta_2-\mu  -\frac{\delta_2 \gamma_2 }{\delta_2+\mu}\right)D_2 -\frac{D_2}{N} \left[\left(
\beta_2\left( \frac{\gamma_1 }{\delta_1+\mu} + 1\right) +\alpha_1 -\alpha_2\right)D_1 +\beta_2\left( \frac{\gamma_2}{\delta_2+\mu} +1\right) D_2\right].\\
}{6}
\normalsize
We will analyze \eqref{6}, which will be referred to as the reduced model. This system incorporates the addition of a second drug while retaining the spirit of the original model \cite{Castillo}. 

One can recognize this system as the competitive Lotka-Volterra equations, which are well-known in population dynamics  as a model  that describes  time-dependent dynamics of species competing for a common resource. The system in general is very well studied, but nevertheless, since the results depend on the parameters, we here investigate  the regimes  that are of particular importance to our application.

\subsection{Fixed Points}

The next part of the analysis is to locate the fixed points of \eqref{6}. The fixed points are important as they determine the asymptotic behavior of  the solution curves  in the phase portrait, so the evolution of both addicted populations can be determined by analyzing the fixed points of the reduced system. This section shows the derivation and implication in context of each fixed point of the reduced model.

Fixed points occur whenever both equations in the system are identically zero. From inspection of \eqref{6}, it is clear that the origin is a fixed point. This outcome reflects the scenario where usage of both drugs die out. Over time, the number of users for both drugs will dwindle as the solution curves head towards the origin. Solution curves pointing away from the origin indicates an increase in the number of users for both drugs. A more detailed analysis of the origin will be discussed later in this section. 

Two more fixed points can be found by setting either $D_1=0$ and $D_2=0$. The first case yields a fixed point of
\begin{equation}\label{7}
\left(
\frac{\beta_1+ \frac{\gamma_1 \delta_1}{\delta_1+\mu} - \gamma_1  -\mu}{ \frac{\beta_1}{N}\left(1+ \frac{\gamma_1 }{\delta_1+\mu}\right)}, 0
\right),
\end{equation}
provided that $\beta_1\neq 0$. This fixed point implies that no one is addicted to drug 2, while there is a certain nonzero  number of people who prefer drug 1. 

The other fixed point  is
\begin{equation}\label{8}
\left(0, 
\frac{\beta_2+ \frac{\gamma_2 \delta_2}{\delta_2+\mu} - \gamma_2  -\mu}{ \frac{\beta_2}{N}\left(1+ \frac{\gamma_2 }{\delta_2+\mu}\right)}
\right),
\end{equation}
provided that $\beta_2\neq 0$. Similarly, this fixed point indicates that the drug 2 addiction  takes over. 

In most cases, these three fixed points will always appear in the analysis. There are some cases where a fourth fixed point might also appear. To obtain this fixed point, factor out  $D_1$ and  $D_2$, respectively, and then solve the system 
 \eq{
 N\left (\beta_1-\mu  -\frac{\delta_1 \gamma_1 }{\delta_1+\mu}\right)&=
\beta_1 \left( \frac{\gamma_1 }{\delta_1+\mu} +1\right)D_1+\left(\beta_1\left( \frac{\gamma_2}{\delta_2+\mu} +1\right)-\alpha_1  +\alpha_2\right )D_2,\\
 N \left(\beta_2-\mu  -\frac{\delta_2 \gamma_2 }{\delta_2+\mu}\right) &=\left(
\beta_2\left( \frac{\gamma_1 }{\delta_1+\mu} +1\right) +\alpha_1 - \alpha_2\right)D_1 +\beta_2\left( \frac{\gamma_2}{\delta_2+\mu} +1\right) D_2.
}{9}
The system will have a  unique fourth fixed point, provided that \eqref{9} has a unique solution, which occurs if  the following holds,
\begin{equation}
\det\begin{pmatrix}
\beta_1 \left( \frac{\gamma_1 }{\delta_1+\mu} +1\right) & \beta_1\left( \frac{\gamma_2}{\delta_2+\mu} +1\right)-\alpha_1  +\alpha_2 \\

\beta_2\left( \frac{\gamma_1 }{\delta_1+\mu} +1\right) +\alpha_1 - \alpha_2& \beta_2\left( \frac{\gamma_2}{\delta_2+\mu} +1\right) 
\end{pmatrix}\neq 0. \label{det}
\end{equation}
If the determinant in  \eqref{det} is zero, then the system \eqref{6} will either have no solutions or have infinitely many of them. Therefore there will be either four total equilibrium points in \eqref{6} or a continuum of those. 

Below, we concentrate on the analysis of the stability of the  fixed points where at least one of the components is zero.


\subsection{Jacobian Derivation}

Once the fixed points are known, the stability of the fixed points will be needed to determine where the nearby solution curves are headed. The stability of the fixed points is derived from the Jacobian of the system. We can linearize \eqref{6} by taking partial derivatives. See \cite{Lynch} and \cite{Perko} for an introduction on this method. We first take partial derivatives with respect to  $D_1$ and  $D_2$. The result is four quantities that make up the elements of the Jacobian.

\small
\eq{
&J_{11} = \left( -\frac{2\beta_1}{N}\left( \frac{\gamma_1}{\delta_1+\mu} +1\right)\right)D_1+\left(\frac{\alpha_1 -\alpha_2}{N}-\frac{\beta_1}{N}\left(1+ \frac{\gamma_2}{\delta_2+\mu}\right)\right)D_2 +\beta_1-\frac{\delta_1 \gamma_1}{\delta_1+\mu}-\gamma_1-\mu,  \\
&J_{12} = \left( \frac{\alpha_1 -\alpha_2}{N}-\frac{\beta_1}{N}\left(1+ \frac{\gamma_2}{\delta_2+\mu}\right)\right)D_1,\\  
&J_{21} =\left( \frac{\alpha_2 -\alpha_1}{N} -\frac{\beta_2}{N}\left(1+ \frac{\gamma_1}{\delta_1+\mu}\right) \right)D_2, \\
&J_{22} = \left(\frac{\alpha_2 -\alpha_1}{N}-\frac{\beta_2}{N}\left(1+ \frac{\gamma_1}{\delta_1+\mu}\right)\right)D_1 + \left( -\frac{2\beta_2}{N}\left( \frac{\gamma_2}{\delta_2+\mu} +1\right)\right) D_2 + \beta_2-\frac{\delta_2 \gamma_2}{\delta_2+\mu}-\gamma_2-\mu.
}{11}
\normalsize

The elements of the Jacobian, unlike the fixed points, include $\alpha_1$  and $\alpha_2$. The interaction parameters are only involved in the stability of the fixed points alone. They do not contribute where the fixed points will occur. Also, the relative size of  $\alpha_1$  and $\alpha_2$  does not influence the model directly. What is important is  the difference of these two values. If the switchover is not very large, then the two drugs do not have much of an influence on each other. On the other hand, if the difference is substantially large, then one of the drugs has a strong influence on the use of the other.

The eigenvalues of the Jacobian are used to classify the stability of the fixed points \cite{Lynch}. For each fixed point, a pair of eigenvalues can be obtained. If there is an eigenvalue with a positive real part then  the fixed point is unstable. This means that all of the solution curves near the fixed point travel away from the point for all time. If both eigenvalues have  negative real parts then the fixed point is asymptotically stable. The solution curves near the stable fixed point tend to go towards the point as  time progresses. An example of an unstable fixed point occurs   if the pair of eigenvalues do not have the same sign. The fixed point, in this case, is a saddle.

The stability of the fixed points are used to determine the fate of both drug populations. For example, if the fixed point given by \eqref{7} is stable, then the population will see the use of the second drug die out in due time if the population distribution is  initially close to this fixed point. If this fixed point is unstable then the population will not see this outcome occur.  For the details about the method of classifying these fixed points based on their eigenvalues, one may consult \cite{Strogatz}.

Because so many parameters are involved in our model, in some situations we calculate the eigenvalues numerically to determine both the fixed points and their stability for particular parameter regimes. Calculations in this paper are done using the {\it pplane8} application \cite{Polking}, available for MATLAB or other software. Equation \eqref{6}, as it is written, can be implemented into the application and the program can then produce solution curves for any set of parameter values.

\subsection{The stability of the origin.}

The origin is a special fixed point of the reduced model. At the origin, the number of users for both drugs is zero. If this fixed point is stable, then it  implies that there will be no drug users in the long term if there is a small number of drug users initially. Special analysis can be done on any given parameters in order to easily classify what happens at the origin.

At the origin, the Jacobian given in \eqref{11} simplifies to a diagonal matrix.
\eq{
J_{(0,0)}=\left[\begin{matrix}\beta_1+\frac{\gamma_1\beta_1}{\delta_1+\mu}- \gamma_1  -\mu  &0\\
0& \beta_2+\frac{\gamma_2\beta_2}{\delta_2+\mu}- \gamma_2  -\mu \end{matrix}
\right].
}{12}
Since the Jacobian in this case is a diagonal matrix, the eigenvalues are simply the elements along the diagonal. A similar procedure of looking at the signs of the eigenvalues, just like for the other fixed points, could be done. An alternate method of determining the stability of the origin will be derived here. This involves looking at the size of $\mu$ relative to the other given parameters.

Let $i=1,2$ denote the two drugs in the population and examine the quantity
\begin{equation}\theta_i=\beta_i+\frac{\gamma_i\beta_i}{\delta_i+\mu}- \gamma_i. \label{13}\end{equation}
If $ \theta_i<\mu $ then this implies that $\lambda_i<0$, and if $ \theta_i>\mu $ then $\lambda_i>0$. Notice once again that the stability of the origin does not rely on either $\alpha_1$  or $\alpha_2$, so the stability of the origin is not influenced by the choice of $\alpha_1$ and $\alpha_2$. This makes sense intuitively, since the fate of both drug populations should not depend on the amount of interaction between the two groups. With this setup, the origin can be generalized into three separate cases, depending on the relative size of $\mu$.

\begin{enumerate}
\item	If both $\theta_1<\mu$  and  $\theta_2 <\mu$, the origin is a stable node.
\item	If both $\theta_1 >\mu$ and $\theta_2 >\mu$, the origin is an unstable node.
\item	If either  $\theta_1 <\mu$  and $\theta_2  >\mu$   or $\theta_1  >\mu$  and $\theta_2 <\mu$, the origin is a saddle.
\end{enumerate}

In the first case, solution curves near the origin will be pulled towards the origin. This occurs because the size of $\mu$ is large relative to the size of the other parameters. The mortality rate is so large that members of the population are dying before they can even get hooked on drugs. There are not enough drug addicts to maintain both drug populations so their numbers will drop over time until the use of both drugs become extinct.

The second case is the one that is mostly likely to occur in realistic applications. The relative size of $\mu$ should be small in comparison to the other parameters. People are not dying out as rapidly, unlike in the first case. This increases the chances of an individual becoming addicted to drugs. The drug populations can be sustained by attracting new users and so the number of drug addicts should increase with time.

In the third case, the origin is a saddle point. In this situation, the eigenvectors corresponding to the eigenvalues are aligned with the coordinate axes in the phase space. Near the origin,  one of the drug populations dies out very rapidly. At the same time, addicts of the second drug increases in number.

\section{Simulation}

\subsection{Implementation}

A simulation is provided here to illustrate how the model works and what conclusions can be made from it. The parameters given in Table 1 are only used here for demonstration. In reality, these parameters would need to be derived or estimated. Consult \cite{Brauer} and \cite{Castillo} for an introduction on how these parameters can be estimated. The influence rates are found by identifying new drug addicts within the past year who say they have been persuaded by peer pressure, whether their parents have used the drugs before and whether their friends have also used the drug. These new drug users have never been addicted before in their lifetime. We use a similar idea to calculate  $\alpha_1$ and  $\alpha_2$ by examining those who currently have a drug addiction. The recovery rate is calculated by the number of people within the last year who have quit using one or both drugs for at least one year, depending on the user's last drug addiction. The relapse rate is determined by identifying those who have tried to quit from their addictions in the past year but were unsuccessful. The mortality rate is set at 0.1 and the total population size is 10,000 for this demonstration. The population size is large enough to keep the size of the population close to being constant as possible. All of these parameters are assumed to remain constant throughout the entire process. Given this information, the reduced model can be used to determine how the dynamics of the two drugs in the population will change. The dynamics will assist the experimenter in determining which drug is more likely to dominate the population and how quickly the addiction will spread. 

\renewcommand{\arraystretch}{0.7} 
\begin{table}[b]
\begin{tabular}{|ll|}\hline
\multicolumn{2} {|c|}{$\qquad$ {\bf List of parameters} $\qquad$} \\
\hline &\\ 
 $\qquad \alpha_1=0.2$ & $\qquad \alpha_2=0.3 \qquad$\\
$\qquad \beta_1=0.3$ & $\qquad \beta_2=0.5 \qquad$\\ $\qquad \delta_1=0.2$ & $\qquad \delta_2=0.3\qquad$ \\$\qquad \gamma_1=0.03$& $\qquad \gamma_2=0.04 \qquad$\\ $\qquad \mu=0.1$ & $\qquad N=10,000 \qquad$\\ &\\ \hline \multicolumn{2} {c}{$\qquad$}  
\end{tabular}
\caption{\small This is a list of parameters used for the simulation. These parameter values are only for demonstrating how the model works. In real-life applications, these parameters would need to be estimated. In this setup, the second drug is more potent than the first one. The population remains fixed at 10,000 throughout the entire process.}
\end{table}

Software is implemented to determine the fixed points, the Jacobian at each fixed point, and their stability. We use the {\it pplane8} application in MATLAB for these calculations. For an introduction of using MATLAB to analyze dynamical systems, see \cite{Lynch}. Each of the parameters in Table 2 are entered into the application and the program produces the phase portrait shown in Figure 2. The fixed points, along with their stability, can be analyzed from the phase portrait.

\begin{figure}
\scalebox{0.5}{
\includegraphics{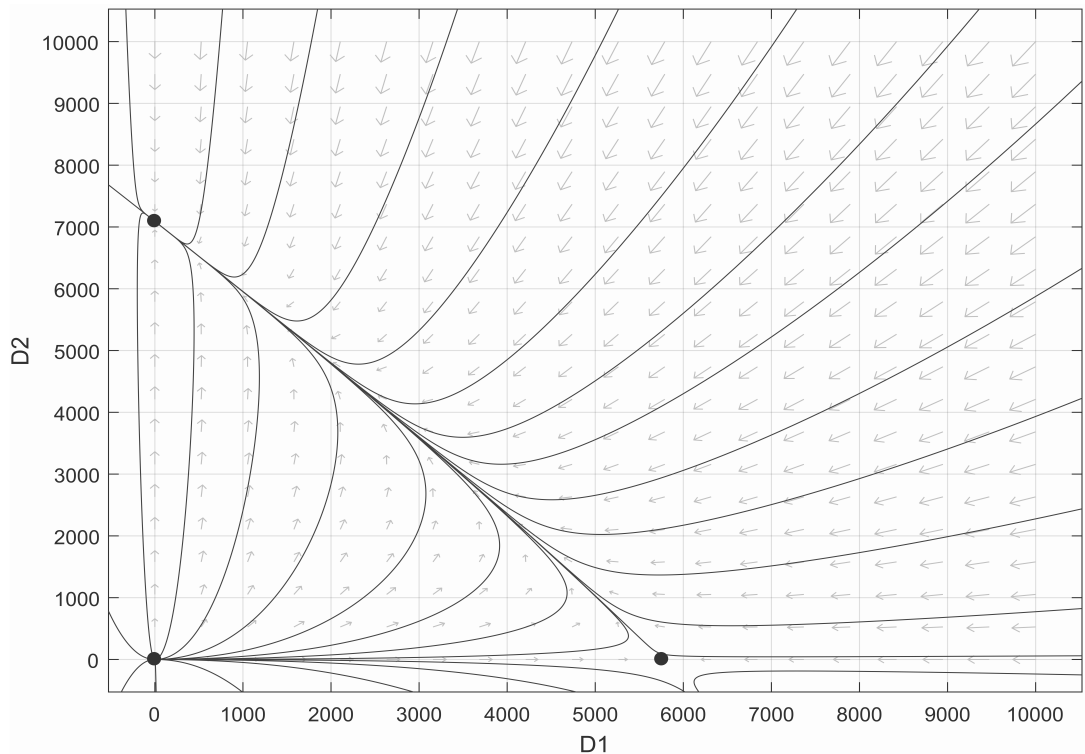}}
\caption{\label{fig:fig2}{\small \bf Phase Plane Portrait of the Simulation Study}:
\small The phase plane portrait uses the parameter values specified in Table 1. There are three fixed points, as marked above. One fixed point includes the origin (which is unstable in this case) while the other two are affixed along the  $D_1$ and $D_2$  axis. The fixed point along the $D_1$  axis is stable while the one on the $D_2$ axis is a saddle. Only a certain region of the phase plane needs to be analyzed since the total population remains fixed at 10,000.
 }
\end{figure}

The phase plane portrait in Figure \ref{fig:fig2} shows three fixed points. One fixed point is located at the origin while the other two are along the $D_1$ and $D_2$  axis. The solution curves tend to flow towards the fixed point along the $D_2$  axis, indicating that this fixed point is stable. The origin is unstable with all of the solution curves pointing away from the fixed point. The third fixed point along the $D_1$  axis is a saddle point. While some solutions flow towards the saddle, ultimately the solutions tend to move away and towards the stable node instead. 
Changing the parameters ever so slightly will adjust how the solutions are drawn. A change of say 0.1 to the influence rates will dramatically affect how each of the solution curves are drawn but may or may not affect the position of the fixed points or their stability.
Keep in mind that the population remains fixed at 10,000 throughout so the solution curves in the top-right corner of the phase plane are irrelevant. The fixed points are also not close to 10,000 but rather settle far below that threshold. While the population remains fixed for all time, the model suggests that not every person in this population will eventually become a drug user. There are those in the population who have recovered from their drug habits. Still, others will never become drug addicts in the first place. This explains why none of the fixed points ever come close to 10,000. The phase portrait also confirms that there are only three fixed points in this simulation.

There are three nullclines in Figure 2. Two lie on the $D_1$ and $D_2$ axes, respectively. These nullclines occur when either one of $\frac{d\, D_1}{d \, t}$ or $\frac{d\, D_2}{d \, t}$ is 0 \cite{Perko}, \cite{Scheinerman}. The third nullcline connects the two fixed points along the $D_1$ and $D_2$ axes. In the phase portrait, solution curves tend towards the nullcline and then swiftly approach the stable node along the $D_2$ axis. This forms a triangular region in the phase plane. Solution curves inside this region tend to see an increase in the number of drug users, regardless of drug choice. Solution curves outside this region depict situations where the number of drug users actually decreases with time before settling on the fixed point.

\subsection{Verification of the Results}

The three fixed points identified in Figure 2 can be verified numerically using Equations \eqref{7} and \eqref{8} to determine their location in the phase plane. All of the calculations done here were performed with the {\it pplane8} application in MATLAB \cite{Polking}. With the parameters given in Table 1, Equations \eqref{7} and \eqref{8} can be used to calculate the fixed point along the  $D_1$ and $D_2$ axis respectively. Therefore, it can be shown, using software or by direct calculation, the fixed point along the $D_1$ axis is (5757.576, 0) and the fixed point along the  $D_2$ axis is (0, 7090.909) in this simulation.
In order to verify the stability of each of the fixed points, the Jacobian needs to be calculated for each point. For the origin, the Jacobian calculation can be shown to be
\begin{equation*} J_{(0, 0)}=\left[\begin{matrix}0.19  &0\\0& 0.39 \end{matrix}
\right].
\end{equation*}
Note that the Jacobian at the origin is indeed a diagonal matrix. The eigenvalues are easily found by identifying the elements along the diagonal. Both eigenvalues are positive in this case which confirms that the origin is unstable. Advanced methods, as discussed in \cite{Strogatz} can be used to show that the origin is actually an unstable node. Using \eqref{13}, calculations show
 $$\theta_1=0.29, \qquad \theta_2 =0.49.$$
Since  $\mu < \theta_1$ and  $\mu< \theta_2$, the origin is an unstable fixed point. The model says that the population of drug users will not die out due to a large mortality rate, again confirming the origin being unstable.
Similarly, the Jacobian of the other two fixed points can be calculated either via software or by using direct calculations.
\begin{equation*} J_{(5757.576, 0)}=\left[\begin{matrix}-0.19  &-0.19\\0& 0.0733 \end{matrix}
\right],
\end{equation*}
\begin{equation*} J_{(0, 7090.909)}=\left[\begin{matrix}-0.044&0\\-0.39&-0.39 \end{matrix}
\right].
\end{equation*}
The Jacobian matrices here are triangular, so the eigenvalues are given by the diagonal elements. The eigenvalues for the first Jacobian matrix are  $\lambda_1=-0.19$ and  $\lambda_2=0.0733$. These eigenvalues do not share the same sign, validating that the fixed point along the  axis is indeed a saddle. The eigenvalues for the second Jacobian matrix are  $\lambda_1=-0.39$ and  $\lambda_2=-0.044$. Both of these eigenvalues are negative, confirming the fixed point along the axis is stable. It can also be shown, using advanced methods, that this fixed point is indeed a stable node.

\subsection{Interpretation}

For this simulation, the model identified three fixed points with parameter values given in Table 1. The origin marks where the population is free of the two drug addictions; no person is addicted to either drug. The origin was shown to be unstable, representing a growth in drug usage in this case. This kind of behavior is typical, as one would expect to see some drug addicts in any given society. The stable fixed point along the $D_2$ axis shows that this population will eventually become addicted to drug 2. In this case, the drug 1 population will slowly die out as more people become addicted to drug 2. How quickly this outcome occurs depends on the parameters and the initial conditions. If $\beta_2$ and $\alpha_2$ are chosen to be large, then drug 2 will quickly become the dominate drug. The saddle point along the $D_1$  is due to the initial conditions. If there are many drug 1 addicts, that group will slowly see an increase in users initially. However, their members will ultimately become drug 2 addicts due to their influence. The location of the fixed points give an estimate of how many people will be addicted to either drug in the long run. Therefore, based on the information provided for this population at this current state, drug 2 will become the dominate drug of choice over drug 1 in the foreseeable future.

\section{Conclusions}

The model proposed in this paper is an extension to the model provided by Castillo-Garsow, Jordan-Salivia, and Rodriguez-Herrera \cite{Castillo} by incorporating two drugs in a population. In this setup, the option is there for one drug to have a larger influence than the other one. This model demonstrates what happens when these kinds of situations occur. The larger the influence on the susceptible group, the more people become addicted to that type of drug. In the model however, the larger the influence a particular drug has on the other one, the more likely people are to switch their preference between drug habits. In the end, the switchover rate has a much stronger effect than the influence rate only in determining which drug will become dominant. The switchover rate plays no role in how many drug users there will be or whether both drug populations will manage to coexist.

The analysis shows what happens over time in a society given two different kinds of drugs. The fixed points indicate what will happen in the population in the long run. Given a set of initial conditions, usually observed in practice, one can predict which drug addiction will dominate. One outcome leads to a drug-free population where there are no active drug addicts. Another population may see individuals addicted to both drugs that are readily available. Still, one drug addiction overtaking  another drug addiction in a population is also a possibility. With the outcome of a population known, preparations can be made to target users who are addicted to the more popular drug.

The model can be extended to cover certain special cases, some of which will be mentioned here. One application is to let one of $\alpha_i$ be zero. In this setup, users cannot switch their preference back to drug $i$. Once a person becomes addicted to one drug, they cannot switch their preference back to the previous one. Users of the ``harder'' drug are left with only two options: recover from the addiction or leave the population altogether.  Another special case involves one drug having a very strong influence on the other. In this situation, $ \alpha_1 \ll\alpha_2$ and $ \beta_1 \ll\beta_2$. Typically in these situations, one minor drug addiction leads to a more serious one. While it is possible to revert preference back to the minor drug, cases like these are rare in these situations. 

The model can also be easily applied to situations where two diseases are present in a population. This idea reflects upon the Epidemic SIR Model of Kermack and McKendrick \cite{Brauer}. Let $D_i$ denote those in a population who are infected with disease $i$. In situations where a disease may be fatal, let $\gamma_i=\delta_i=R_i=0$ and define the $\alpha$s appropriately. In this setting, persons infected with the fatal disease cannot be cured in the population. Those infected must live with the disease until they leave the population.

Depending on the initial values of the parameters, it may be possible to force the system to behave in such a way that other equilibria appear. This paper examines three of the fixed points but the possibility is there for more fixed points to exist. The existence of more than three fixed points indicates the presence of a coexisting fixed point. The stability of the three fixed points derived here will, more than likely, change to accommodate the new fixed points. Investigating coexisting fixed points could be used as a topic of future research and insight.

\section{Acknowledgments}
The authors are grateful to the anonymous reviewer for detailed comments and many suggestions that significantly improved this paper.

This project  was  developed under the guidance of A. Ghazaryan and was supported by the National Science Foundation through grants DMS-1311313 (to A. Ghazaryan).

Stephen Colegate earned his M.S. in Statistics at Miami University in Oxford, Ohio. His research interests include dynamical systems and population models, time series, Bayesian inference. As an instructor at Xavier University in Cincinnati, Ohio, Stephen loves to share his passion of mathematics and statistics with his students. His hobbies include reading, bicycling, quad-riding, and traveling to new places. Stephen also enjoys spending time with his family (Louie, Sheila, and Zachary) and is a member of Hamilton Christian Center in Hamilton, Ohio.

Changrui (Charlie) Liu is a graduate student at Miami University in Oxford, Ohio, majoring in mathematics. His research areas including dynamical systems, differential equations, complex analysis and probability theory. He is fond of swimming, playing pianos and also traveling.
\end{document}